\newtheorem{theorem}{Theorem}[section]
\newtheorem{proposition}[theorem]{Proposition}
\newtheorem{corollary}[theorem]{Corollary}
\theoremstyle{definition}
\newtheorem{definition}[theorem]{Definition}
\theoremstyle{remark}
\newtheorem{remark}[theorem]{Remark}
\numberwithin{equation}{section}
\renewcommand{\b}{\beta}
\renewcommand{\d}{\delta}
\newcommand{\G}{\Gamma}
\renewcommand{\l}{\lambda}
\renewcommand{\o}{\omega}
\renewcommand{\O}{\Omega}
\newcommand{\pa}{\partial}
\renewcommand{\r}{\rho}
\newcommand{\Th}{\Theta}
\renewcommand{\th}{\theta}
\newcommand{\we}{\wedge}
\begin{document}

\title[CR invariant subvarieties  ]{ Nullity of the  Levi-form and the associated subvarieties for pseudo-convex CR structures of hypersurface type }

\author[K. Chung]{Kuerak Chung}
\author[C.-K. Han]{Chong-Kyu Han}

\address[K. Chung]{ Korea Institute for Advanced Study, 85 Hoegi-ro, Dongdaemun-gu, Seoul 02455, Republic of Korea}
\email{krchung@kias.re.kr}

\address[C.-K. Han]{Department of Mathematics, Seoul National University, 1 Gwanak-ro, Gwanak-gu, Seoul 08826, Republic of Korea}
\email{ckhan@snu.ac.kr}

\thanks{The authors were  partially supported by National Research Foundation
of Korea with grant   NRF-
2017R1A2A2B4007119.       }

\subjclass[2010] {Primary 32V05, 53A55 ; Secondary 32V25, 35N10 } \keywords{CR structure, invariants subvarieties, nullity of Levi-form, complex submanifolds 
}

\begin{abstract}
Let $M^{2n+1}$, $n\ge 1$, be a smooth manifold with a pseudo-convex integrable CR structure  of hypersurface type. We consider a sequence of CR invariant subsets $ M=\mathcal S_0 \supset \mathcal S_1 \supset \cdots \supset \mathcal S_{n}, $  where $\mathcal S_q$ is the set of points where the Levi-form has nullity $\ge q$. We prove  that $\mathcal S_q$'s are  locally given as  common zero sets of the coefficients $A_j,$ $j=0,1,\ldots, q-1,$  of the characteristic polynomial of the Levi-form.  Some sufficient conditions for local existence of complex submanifolds  are presented in terms of the coefficients $A_j$.  
\end{abstract}

 \maketitle

\section*{Introduction  }

Let $M, $ or to specify the dimension $M^{2n+1}, $ $n\ge 1, $   be  a smooth ($C^{\infty}$)  manifold equipped with  a pseudo-convex integrable CR structure $(H(M),J)$ of hypersurface type.   We consider a sequence of CR-invariant subsets
\begin{equation*}\label{sequence} M=\mathcal S_0 \supset \mathcal S_1 \supset \cdots \supset \mathcal S_{n}, \end{equation*}  
where $\mathcal S_q$ is the set of those points where the Levi-form has nullity greater than or equal to $q$.  In \S 2 we prove that $\mathcal S_q$ is  locally given as a common zero set of the coefficients of the characteristic polynomial of the matrix representation of the Levi-form.  A complex submanifold of $M$ of complex dimension $q$,  $1\le q \le n, $ is a local embedding $f $ of an open subset 
$\mathcal O\subset \mathbb C^q $  that satisfies 
\begin{equation}\label{Cauchy-Riemann} \begin{aligned} 
df(T_x(\mathcal O)) &\subset H_{f(x)}(M), ~~ \forall x\in \mathcal O ~~ \quad  \text{ and  }&\\
 J\circ df &= df\circ J_{st}, \end{aligned}\end{equation} where $J_{st}$ is the standard complex structure tensor of $\mathbb C^q$.  Solving (\ref{Cauchy-Riemann}) for $f$ is an over-determined problem and  there is no solution generically. If such $f$ exists then its image must be contained in $\mathcal S_q.$ Hence a necessary condition for the existence of a complex manifold of dimension $q$ is  that  $\mathcal S_q $ has real dimension $\ge 2q.$  
 In \S 3 we present  some sufficient conditions for the existence of real defining functions for a complex submanifold of dimension $q.$  These are conditions on the coefficients of the characteristic polynomial of the levi-form that define $\mathcal S_q$.  In several complex variables  the function theory of a domain often depends on the geometry of the boundary. In particular,   at a boundary point $x$ of a pseudo-convex domain with smooth real-analytic boundary the subelliptic estimate  for the $\bar\pa$-Neumann problem holds for $(p,q)$-forms  if  there is no germ at $x$ of  a complex variety of dimension greater than or equal to $q$ in the boundary (see \cite{K} and \cite{DF}).   For real hypersurfaces in general, without assuming pseudo-convexity, the existence of a germ of complex hypersurface  is an  obstruction to the extension of holomorphic  functions of one side of $M$ to the other side (see \cite{T}).  The existence of complex submanifolds in real hypersurfaces has been studiied in \cite{AH}, \cite{HT} and \cite{NQD} without assuming the pseudo-convexity.  The present paper makes essential use of the fact that the eigen-values of the Levi-form are non-negative for pseudo-convex CR manifolds.  
The coefficients $A_j$ of the characteristic polynomial of the Levi-form depends on the choice of local basis of $(1,0)$ vectors,  they are invariant only under unitary change of basis.  However,  $\mathcal S_q$  the common zero set of $A_j,$ $j=0,1,\ldots, q-1,$ 
is a CR invariant,  which  is interesting  from the viewpoint of the invariant theory.  We work in $C^{\infty}$ category.  All the manifolds  in this paper shall be assumed to be connected and oriented, or  to be a connected submanifold sitting in  a small neighborhood $U\subset M.$   The authors thank  Dmitri Zaitsev for the discussions we had while he was visiting KIAS.
We thank  Sungyeon Kim  for the valuable comments and  suggestions that she gave us.

\vskip 1pc

\section{Pseudo-convexity and the nullity of the Levi-form }
\vskip 1pc

 Let $ M$  be a smooth ($C^{\infty}$) manifold of  dimension $2n+1,$  $n\ge 1. $  A CR structure of hypersurface type, or simply a CR structure,  on $M$ is a pair 
$(H(M), J), $ where $H(M)$ is a 
smooth subbundle  of codimension $1$ of the tangent bundle  $T(M) $  and $J$ is  an almost complex structure
 on $H(M)$,  that is,  $J: H(M)\longrightarrow H(M)$ is a smooth  bundle isomorphism  for which 
$$\begin{matrix} H(M) & & \overset{J}\longrightarrow&&H(M)\\ 
& \searrow  &&\swarrow &\\ & &M &  \end{matrix}$$
commutes and 
\[
 J^2 = - I_{2n} .
 \]

Then the complexification $\mathbb C\otimes H(M)$ has a decomposition  
\begin{equation*}\label{decompose1} \mathbb C\otimes H(M) = H^{1,0}(M) \oplus H^{0,1}(M), \end{equation*} 
where $H^{1,0}(M)$ (resp.  $H^{0,1}(M)$) is  the set of eigen-vectors of $J$ corresponding to the eigen-value
$\sqrt{-1}$ (resp. $-\sqrt{-1}$).  Locally,  there exist real vector fields $X_1, \ldots, X_n$ so  that $X_j, JX_j, $  $j=1, \ldots, n,  $ span 
$H(M).$ Then 

\begin{equation}\label{10basis} L_j:= X_j - \sqrt{-1} JX_j , ~~j=1, \ldots, n \end{equation}
span
 $H^{1,0}(M)$ and 
\begin{equation}\label{01basis} \bar L_j:= X_j + \sqrt{-1} JX_j , ~~j=1, \ldots, n \end{equation} span $H^{0,1}(M)$, respectively.
A diffeomorphism $F$ of  $M$ onto a CR manifold of the same dimension  $\widetilde M$  is called a CR mapping if $F$ preserves the CR structure, namely,  $dF$ maps $H(M)$ onto $H(\widetilde M)$ and 
\begin{equation}\label{CR equiv1} dF\circ J = J \circ dF
\end{equation} on $H(M)$.  The CR structure bundle $H(M)$ is locally given by a non-vanishing real $1$-form that annihilates $H(M)$:   any point $x\in M$ has a neighborhood $U$ on which there is a smooth non-vanishing real $1$-form $\th$ such that 
\begin{equation}\label{theta} H(U) = \th^{\perp}:=\{V\in T(U): \th(V)=0\}.
\end{equation}
We fix $U$ for our local arguments
assuming the local bases (\ref{10basis}) and (\ref{01basis}) are  defined on $U$.
  By $\O^0 (U) = C^{\infty}(U)$ we denote the ring of smooth complex valued functions  on $U$ and by $\O^p(U)$ the module over $\O^0(U)$ of smooth $p$-forms and by
\begin{equation*}\label{EDS} \O^*(U) =\bigoplus_{p=0}^{2n+1} \O^p(U)\end{equation*}
the exterior algebra of smooth differential forms on $U$ with complex  coefficients. 
 A subalgebra $\mathcal I\subset \O^*(U)$  is called an ideal if the following conditions hold: 
 \vskip 1pc
\noindent {\it i)} 
$ \mathcal I\we \O^*(U) \subset \mathcal I $ 

\noindent{\it ii)} if  $\phi=\sum_{p=0}^{2n+1} \phi_p \in \mathcal I,  ~ \phi_p\in\O^p(U), $  then each $\phi_p\in \mathcal I~~ $   (homogeneity).
\vskip 1pc  Because of the homogeneity condition 
$\mathcal I $ is two-sided, that is,  $$\O^*(U)\we \mathcal I \subset \mathcal I.$$ 
We consider in this paper only those ideals that are generated by $0$-forms (functions) and $1$-forms:
for a finite set of smooth functions  ${\bf r}=(r^1, \ldots, r^d)$ 
and a finite set of smooth $1$-forms ${\bf \Th}=(\th^1, \ldots, \th^{s})$ let
$\mathcal I({\bf r}, {\bf \Th})$ be the ideal generated by $ {\bf r}$ and $ {\bf \Th}.$
For two elements $\phi, \psi \in \O^*(U)$  we write 
\begin{equation*} \phi \equiv \psi, \quad\text{mod } ({\bf r}, {\bf\Th}) \end{equation*} or
\begin{equation*} \phi  \overset{({\bf r}, {\bf \Th})} \equiv \psi, \end{equation*}
if and only if 
$$ \phi - \psi \in \mathcal I({\bf r}, {\bf\Th}).$$   The system $\bf r$ is said to be \emph{non-degenerate} if $dr^1\we\cdots\we dr^d\neq 0.$
For other definitions and notations concerning the  exterior algebra we refer the readers to \cite{BCG3}. 
The CR structure bundle $H(M)$ is  \emph{integrable}  in the sense of  Frobenius if
\begin{equation}\label{integrability} d\th \equiv 0, ~~~ \text{ mod } (\th), \end{equation}  
where $\th$ is as in (\ref{theta}).     By the Frobenius theorem if (\ref{integrability}) holds $M$ is locally foliated by integral manifolds of $H(M). $  If  this is the case  $J$ is an almost complex structure on each leaf.  
\begin{definition}\label{Levi} The \emph{torsion tensor} of the CR structure  is 
\begin{equation}\label{Levi2} d\th, \text{ mod } (\th).\end{equation}
\end{definition}  
We regard (\ref{Levi2}) as an element of the quotient module $\O^2(U)/(\th)$, where $(\th$) is the set of all  $2$-forms 
$$\{\th\we \o : ~~\o\in \O^1(U)\}.$$  From the viewpoint of (\ref{integrability}),  the torsion tensor  is the obstruction to the local integrability of $H(M)$. 
The torsion tensor of the CR structure  defines a hermitian form  $\mathcal L$,  which is  called the \emph{Levi-form,}  on $H^{1,0}(U)$ by 
\begin{equation}\label{Levi3} \mathcal L (L, L') := \frac{1}{\sqrt{-1}} d\th(L\we\bar L').\end{equation}   If $\mathcal L$ is semi-definite at $x$ the CR structure is said to be \emph{pseudo-convex} at $x$.   If $\mathcal L$ is semi-definite at every point of $M$ the CR structure is said to be pseudo-convex.

\begin{definition}\label{nullity} At a point $x\in M$ the \emph{null space} of $\mathcal L$ is 
\begin{equation}\label{null space} \mathcal N_x := \{L\in H^{1,0}_x(M) : \mathcal L(L, L)=0\}.\end{equation}   Notice that if $M$ is pseudo-convex then $ \mathcal N_x $ is a subspace.
The complex dimension of $\mathcal N_x$ is called the \emph{nullity} of the Levi-form at $x.$   \end{definition}   
 If $F$ is a CR mapping of $M$ onto a CR manifold of same dimension $\widetilde M$ then by \eqref{CR equiv1} 
$dF$ preserves the type of  vectors, that is, $dF$ extends to the isomorphism
\begin{equation*}\label{type preserv} \begin{aligned} H^{1,0}(M) &\overset{dF}\longrightarrow H^{1,0}(\widetilde M)\\
 H^{0,1}(M) &\overset{dF}\longrightarrow H^{0,1}(\widetilde M), \end{aligned}\end{equation*}
Thus we have 
\begin{proposition}\label{global CR} Let $(H(M), J)$ be a smooth pseudo-convex  CR structure on a smooth manifold $M^{2n+1}, $ $n\ge 1. $  Then the notions of the null space and the nullity of the Levi-form defined locally by (\ref{Levi3})-(\ref{null space}) do not depend on the choice of $\th, $   and therefore,  well defined globally.  The nullity of the Levi-form is invariant under CR mappings.    \end{proposition}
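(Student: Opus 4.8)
The plan is to reduce both assertions to the single observation that for a $(1,0)$-vector $L$ and the conjugate of a $(1,0)$-vector, i.e. a $(0,1)$-vector $\bar L'$, the annihilating form $\theta$ vanishes, so any $2$-form divisible by $\theta$ is killed by the mixed pairing $L\wedge\bar L'$. This is the mechanism behind both independence of $\theta$ and naturality under pullback.

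First I would establish independence of the choice of $\theta$. Any two smooth non-vanishing real $1$-forms annihilating $H(U)$ differ by a smooth non-vanishing real factor, so it suffices to compare $\theta$ with $\tilde\theta=\lambda\theta$. Computing $d\tilde\theta = d\lambda\wedge\theta + \lambda\,d\theta$ and evaluating on $L\wedge\bar L'$ with $L,L'\in H^{1,0}(U)$, the extra term $(d\lambda\wedge\theta)(L\wedge\bar L')=d\lambda(L)\,\theta(\bar L')-d\lambda(\bar L')\,\theta(L)$ vanishes because $\theta(L)=\theta(\bar L')=0$. Hence $\widetilde{\mathcal L}(L,L')=\lambda\,\mathcal L(L,L')$, so the two Levi-forms differ only by the nowhere-zero real scalar $\lambda$; in particular $\mathcal L(L,L)=0 \iff \widetilde{\mathcal L}(L,L)=0$, so the null space $\mathcal N_x$ of Definition \ref{nullity}, and therefore its dimension, is unchanged. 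Since any two local defining forms agree up to such a factor on overlaps, $\mathcal N_x$ and the nullity are well defined globally.

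For CR invariance, let $F\colon M\to\widetilde M$ be a CR diffeomorphism and let $\tilde\theta$ be a defining form for $H(\widetilde M)$ near $F(x)$. Because $dF$ carries $H(M)$ onto $H(\widetilde M)$ by \eqref{CR equiv1}, the pullback $\theta:=F^*\tilde\theta$ is a non-vanishing real $1$-form annihilating $H(M)$, hence a legitimate defining form which, by the previous paragraph, may be used to compute $\mathcal L$. Using $d(F^*\tilde\theta)=F^*d\tilde\theta$ and the identity $(F^*d\tilde\theta)(L\wedge\bar L')=d\tilde\theta\bigl(dF(L)\wedge dF(\bar L')\bigr)$, together with the fact noted before the statement that $dF$ preserves the type of vectors, so that $dF(L)\in H^{1,0}(\widetilde M)$ and $dF(\bar L')=\overline{dF(L')}\in H^{0,1}(\widetilde M)$, I obtain $\mathcal L(L,L')=\widetilde{\mathcal L}\bigl(dF(L),dF(L')\bigr)$. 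Thus $dF$ intertwines the two Levi-forms, and since $dF\colon H^{1,0}_x(M)\to H^{1,0}_{F(x)}(\widetilde M)$ is a linear isomorphism it carries $\mathcal N_x$ isomorphically onto $\widetilde{\mathcal N}_{F(x)}$, giving equality of the nullities.

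The computations are routine once the key cancellation is secured; the only points requiring care — and the natural places for an error — are verifying that $F^*\tilde\theta$ is again a genuine real non-vanishing defining form, and applying the pullback and wedge-evaluation identities to the correct mixed-type argument $L\wedge\bar L'$ rather than to $L\wedge L'$ or $\bar L\wedge\bar L'$, since $\mathcal L$ is hermitian and only the mixed pairing enters its definition \eqref{Levi3}.
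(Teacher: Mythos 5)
Your proof is correct and follows the same route the paper intends: the paper merely observes that $dF$ preserves the type decomposition and asserts the proposition, while you supply the standard computations (the cancellation of $d\lambda\wedge\theta$ on $H(M)$ for independence of $\theta$, and the pullback identity $\mathcal L(L,L')=\widetilde{\mathcal L}(dF(L),dF(L'))$ for CR invariance) that justify it. No gaps.
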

Now  for each $q=0, 1, \ldots, n, $  let 
$ \mathcal S_q $ be the set of points where the nullity of the Levi-form is greater than or equal to $q$. 
Then we have a sequence of CR-invariant subsets
\begin{equation*}\label{sequence} M=\mathcal S_0 \supset \mathcal S_1 \supset \cdots \supset \mathcal S_{n}. \end{equation*}  

\vskip 2pc
\section{Invariant subvarieties of pseudo-convex CR manifolds }
Consider the  matrix representation of the Levi-form $\mathcal L$ with respect to the basis 
(\ref{10basis}):  let 
\begin{equation}\label{Levi matrix} T:=\left[T_{jk}\right]_{n\times n}, \text {where } T_{jk}:=  \mathcal L(L_j, L_k).\end{equation}   $T$ is a hermitian matrix and assumed to be positive semi-definite.

\begin{theorem} Suppose that a smooth real manifold  $M^{2n+1} $ admits a pseudo-convex CR structure $(H(M), J).$
Let  $T$ be a matrix representation of the Levi-form  defined locally as in (\ref{Levi matrix})  and   
\begin{equation*}\label{characteristic} \text{det } (T-\l I) :=   \sum_{k=0}^{n-1} A_k (-\l)^k + (-\l)^n \end{equation*}
be the characteristic polynomial of $T$. Then for each $q= 1, \ldots, n, $  $\mathcal S_q$ is  given as a common zero set of $A_j$, for all $j $ with $0\le j\le q-1.$ 
\end{theorem}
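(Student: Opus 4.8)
The plan is to reduce the statement to a fact about positive semi-definite Hermitian matrices, the essential input being that pseudo-convexity forces all eigenvalues of $T$ to be nonnegative, so that no cancellation can occur among the terms of the symmetric functions $A_j$.

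First I would fix $x\in M$ and set up the dictionary between the invariant data and the linear algebra of $T$. Since $T$ is Hermitian and positive semi-definite, its eigenvalues $\m_1,\ldots,\m_n$ are real and $\ge 0$; let $r$ denote the number of vanishing eigenvalues. I would check that the null space $\N_x$ of Definition \ref{nullity} coincides with $\Ker T$: for a positive semi-definite form, $\mathcal L(L,L)=0$ forces $\mathcal L(L,\cdot)=0$ by the Cauchy--Schwarz inequality $|\mathcal L(L,L')|^2\le \mathcal L(L,L)\,\mathcal L(L',L')$, so $\N_x=\Ker T$. Hence the nullity of the Levi-form at $x$ equals $\dim\Ker T=r$. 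Next I would identify the coefficients $A_j$ by expanding $\det(T-\l I)=\prod_{i}(\m_i-\l)$ in powers of $(-\l)$ and comparing with the given normalization; this yields $A_j=e_{n-j}(\m_1,\ldots,\m_n)$, the $(n-j)$-th elementary symmetric polynomial of the eigenvalues, so in particular $A_0=\det T$.

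The crux is then a positivity observation. Because every $\m_i\ge 0$, each $A_j=e_{n-j}(\m)$ is a sum of nonnegative terms, each a product of $n-j$ distinct eigenvalues; thus $A_j\ge 0$, with equality exactly when every such product vanishes. A product of $n-j$ distinct eigenvalues is nonzero iff one can select $n-j$ strictly positive eigenvalues, which is possible iff the number $n-r$ of positive eigenvalues is at least $n-j$, i.e. iff $r\le j$. Therefore $A_j=0$ if and only if $r\ge j+1$. Letting $j=0,1,\ldots,q-1$, the simultaneous vanishing of $A_0,\ldots,A_{q-1}$ is equivalent to the single inequality $r\ge q$ (the binding case being $j=q-1$), which is precisely the condition that the nullity at $x$ is $\ge q$, i.e. $x\in\mathcal S_q$. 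This proves $\mathcal S_q=\{A_0=\cdots=A_{q-1}=0\}$ locally.

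The main obstacle is the positivity step: the argument collapses without the sign hypothesis, since for an indefinite form the symmetric functions $e_{n-j}$ can vanish through cancellation while no eigenvalue is zero, so it is here that pseudo-convexity enters essentially. I would also add a short remark addressing frame dependence: although $T$ and the individual $A_j$ depend on the choice of $\th$ and of the $(1,0)$-frame, a change of frame acts on $T$ by congruence and a rescaling of $\th$ scales all eigenvalues simultaneously, neither of which alters $r$; hence the common zero set $\mathcal S_q$ is well defined, consistently with Proposition \ref{global CR}.
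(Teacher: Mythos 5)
Your proof is correct and follows essentially the same route as the paper's: diagonalize $T$ by the spectral theorem, use pseudo-convexity to force the eigenvalues to be nonnegative, and observe that $A_j=e_{n-j}$ is then a sum of nonnegative products so that its vanishing counts zero eigenvalues. The only real difference is that you identify the null space $\mathcal{N}_x$ with $\Ker T$ via the Cauchy--Schwarz inequality, where the paper instead runs a matrix-rank argument on the coefficient matrix of $q$ independent null vectors; your version also spells out the positivity argument in the converse direction, which the paper states without detail.
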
 

\begin{proof}  We fix a point $x\in M$ and a neighborhood $U$ of $x$.  By a unitary change of basis $\mathcal L$ can be diagonalized.   The pseudo-convexity implies that the diagonal elements are non-negative, and the number of zeros in the diagonal is the nullity of the Levi-form.  To be precise,  define a hermitian metric on $H^{1,0}_x(M)$ by declaring $L_j$'s as in (\ref{10basis})  are orthonormal.   By the spectral theorem, there exists an orthonormal basis $Q_1, \ldots, Q_n$  that are eigen-vectors of $T$.  Let $d_j$ be the eigen-value corresponding to $Q_j$.  Setting   $Q_j = \sum_{k=1}^n Q_j^k L_k $ and  $Q=[Q_j^k]$  we have
\begin{equation*}\label{QTQ*} QT(x)Q^* = QT(x)Q^{-1}= \left[\begin{matrix} d_1 &&&\\& d_2&&\\&&\ddots&\\&&&d_n
\end{matrix}\right] ,  d_j\ge 0, 
\end{equation*} so that 
\begin{equation*} \mathcal L( Q_j, Q_k) = \begin{cases}d_j,\text {if } j=k\\0, ~~~~~\text {if } j\neq k.\end{cases}
\end{equation*}  Suppose $x\in \mathcal S_1.$  Then the Levi-form  $\mathcal L(x)$  has a non-trivial null vector.    Setting it by
\begin{equation*} V = \sum_{j=1}^n b^j Q_j, \end{equation*} we have
\begin{equation}\label{null0} \begin{aligned} 0 &= \mathcal L(V, V)\\
&=\sum_{j=1}^n |b^j|^2 d_j.\end{aligned}\end{equation}
Since $d_j\ge 0,$ (\ref{null0}) implies that at least one of $d_j$'s  is  zero. Therefore, 
\begin{equation*} \mathcal A_0(x) = d_1\cdots d_n = 0.\end{equation*}

Now suppose $x\in \mathcal S_q.$ Then the Levi-form $\mathcal L(x)$  has at least  $q$ independent null vectors.    Setting them by
\begin{equation*} V_{\l} = \sum_{j=1}^n b_{\l}^j Q_j, \end{equation*} we have
\begin{equation}\label{null1} \begin{aligned} 0 &= \mathcal L(V_{\l}, V_{\l})\\
&=\sum_{j=1}^n |b_{\l}^j|^2 d_j, \quad  \l=1, \ldots, q.\end{aligned}\end{equation}
We restate  (\ref{null1}) in matrices  as 
\begin{equation}\label{null2}\underbrace{ \left[\begin{matrix} |b_1^1|^2 &\cdots& |b_1^n|^2\\
\vdots && \vdots\\
 |b_q^1|^2 &\cdots& |b_q^n|^2\end{matrix}\right]}_B  \left[\begin{matrix}d_1\\ \vdots  \\d_n\end{matrix}
\right] = \left[\begin{matrix}0\\ \vdots \\ 0\end{matrix}\right]\end{equation}
Since $V_{\l}$'s are independent 
\begin{equation}\label{null3} \beta:=  \left[\begin{matrix} b_1^1 &\cdots& b_1^n\\
\vdots && \vdots\\
b_q^1 &\cdots& b_q^n\end{matrix}\right]
\end{equation} has rank $q$, so that $\b$ has $q$ independent columns, which implies that $q$ columns 
of $B$ are non-zeros.  Since  $d_j\ge 0,  $ for all $j=1, \ldots, n, $  (\ref{null2}) implies that  at least $q$ of $d_j$'s  are zeros.  For $k=0, 1, \ldots, q-1, $  $A_k(x)$ is the symmetric polynomial of degree $n-k$ in $d_1, \ldots, d_n, $
therefore, $\mathcal A_k(x)=0. $ 

Conversely,  if $A_0(x)=\cdots = A_{q-1}(x)=0 $ then there are at least $q$ zeros in $\{d_1, \ldots, d_n\}, $ say
$$d_1=
\cdots = d_q=0.$$
Then $Q_1, \ldots, Q_q$ are null vectors of $\mathcal L(x).$ 
\end{proof}
\vskip 2pc

\section{Integrable CR structures}
The CR structure $(H(M), J)$ is said to be \emph{integrable} if   the module of smooth sections of $H^{1,0}(M)$,  denoted by 
$\G(H^{1,0}(M)),  $  is closed under the Lie bracket:
\begin{equation}\label{integrability1} [L, L']\in \G(H^{1,0}(M)), \quad \forall L, L'\in \G(H^{1,0}(M)).\end{equation}
(\ref{integrability1}) is equivalent  to 
\begin{equation}\label{Nijenheus}[JX,JY]=[X,Y] +J[JX,Y]+J[X,JY], \quad \forall X, Y\in \G(H(M)).\end{equation} In this section  $M$ is assumed to be a CR manifold with an integrable CR structure.  Then if a submanifold $\mathcal S $ of $ M $ is $J$-invariant, namely, if 
$ JT(\mathcal S) \subset T(\mathcal S),$ 
then $\mathcal S$ is a complex manifold with the complex structure $J$  by the following

\begin{theorem}\label{NN} (Newlander-Nirenberg \cite{NN})    Let $\mathcal S $ be a smooth real manifold of dimension $2m$ with a smooth almost complex structure $J$ that is integrable in the sense that  any vector fields $X$ and $Y$ defined locally on an open neighborhood $U\subset \mathcal S$  satisfy (\ref{Nijenheus}).  Then $\mathcal S$ is covered by coordinate patches with complex  coordinates in which the coordinates in overlapping patches are related by holomorphic transformations, so that for a local coordinate system $(z_1, \ldots, z_m), $ $z_j=x_j+\sqrt{-1}y_j,$ 
\begin{equation*}\label{J coord}\begin{aligned} J\left(\frac{\pa}{\pa x_j}\right)&=\frac{\pa}{\pa y_j}, \\
 J\left(\frac{\pa}{\pa y_j}\right)&= -\frac{\pa}{\pa x_j}, \quad j=1, \ldots, m.\end{aligned}\end{equation*}
\end{theorem}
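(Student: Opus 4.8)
The statement is local, so the plan is to fix a point $p\in\mathcal S$ and to construct $m$ smooth complex-valued functions $z_1,\ldots,z_m$ near $p$ whose differentials are $\mathbb C$-linearly independent and which are $J$-holomorphic, meaning that each $z_k$ is annihilated by every local section of the $(-\sqrt{-1})$-eigenbundle $T^{0,1}\mathcal S$ of $J$. Once such coordinates exist around every point, the transition maps between overlapping patches are automatically holomorphic: a $J$-holomorphic function written in $J$-holomorphic coordinates is holomorphic in the ordinary sense, so the last assertion of the theorem follows from the construction. The role of the integrability hypothesis (\ref{Nijenheus}), now read for vector fields on $\mathcal S$, is that it is equivalent to the closedness of $\G(T^{0,1}\mathcal S)$ under the Lie bracket, exactly as in (\ref{integrability1}); this is the Frobenius-type compatibility condition under which the overdetermined system $\bar L_j z=0$ can be solved.

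In the real-analytic case the argument is short. The plan is to complexify: the involutive complex distribution $T^{0,1}\mathcal S$ extends to a holomorphic involutive distribution on a complexification of $\mathcal S$, and the holomorphic Frobenius theorem then produces holomorphic first integrals whose restrictions give the required coordinates $z_1,\ldots,z_m$. Thus in this case the theorem reduces to the classical complex Frobenius theorem, with bracket-closedness supplying precisely the involutivity needed.

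In the smooth category complexification is unavailable and the heart of the proof is analytic. First I would choose real coordinates so that $J(p)=J_{st}$ and write the $(0,1)$ fields as perturbations $\bar L_j=\frac{\pa}{\pa\bar w_j}+\sum_k a_{jk}\frac{\pa}{\pa w_k}$, where the coefficients $a_{jk}$ vanish at $p$ and, after rescaling, are small in a suitable H\"older norm. Solving $\bar L_j z=0$ then becomes a nonlinear, Beltrami-type perturbation of the constant-coefficient $\bar\pa$ equation, and the integrability condition is exactly the consistency condition that makes this overdetermined system formally solvable. I would solve it by a fixed-point or Newton iteration in H\"older (or Sobolev) spaces, inverting the flat $\bar\pa$ operator and controlling the nonlinear remainder by a priori estimates; in complex dimension one ($m=1$) this is the Beltrami equation handled by quasiconformal theory, while in higher dimension one uses either the original Newlander--Nirenberg H\"older estimates or the $\bar\pa$-Neumann estimates of Kohn and H\"ormander.

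The main obstacle is this last analytic step: establishing a priori estimates with the gain of regularity needed to close the nonlinear iteration, an argument in which bracket-closedness of $T^{0,1}\mathcal S$ is indispensable for solvability. By contrast the geometric reduction to a local $\bar\pa$-problem and the verification that transition maps are holomorphic are routine. Because this PDE core is the substance of the theorem, we cite \cite{NN} for it rather than reproving it here.
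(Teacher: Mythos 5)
The paper offers no proof of this statement: it is quoted verbatim as the classical Newlander--Nirenberg theorem with a citation to \cite{NN}, exactly as you ultimately do by deferring the analytic core (the a priori estimates and the nonlinear iteration for the perturbed $\bar\pa$-system) to that reference. Your outline of the reduction to the local $\bar L_j z=0$ problem, the real-analytic case via the holomorphic Frobenius theorem, and the smooth case via Beltrami-type perturbation is a correct and standard sketch, so your treatment is essentially the same as the paper's.
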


For the definition of the tangential  Cauchy-Riemann operator $\bar\pa_b$ we refer the readers to \cite{CS}.   For a function $f$, $\bar\pa_b f$  is a section of $H^{*0,1}(M):=(H^{0,1}(M))^*  $ defined by  
\[ (\bar\pa_b f)(V) = df (V), \quad\text{for any~ }  V\in\G(H^{0,1}(M)).\]
Let $\pi_{1,0}$, $\pi_{0,1}$ and $\pi_{t}$ be the projections of $\mathbb C\otimes T^*(M)$ onto each component of 
\begin{equation*}\label{CT*M}
\mathbb C\otimes T^*(M) = H^{*1,0}(M) \oplus H^{*0,1}(M) \oplus \langle \th\rangle .
\end{equation*}
 Then  
\begin{equation*}\label{dbarb}\begin{aligned} \bar\pa_b f  &:= \pi_{0,1} ~df \\
\pa_b f &:= \pi_{1,0} ~df . \end{aligned}
 \end{equation*}

 \vskip 1pc

  \begin{theorem}\label{complex manifold} Suppose that $M^{2n+1}$ is a smooth manifold with  integrable CR structure $(H(M), J)$.  For an integer $k, $  $0\le k \le n, $ let $\r_1, \ldots, \r_{2k+1}$ be a non-degenerate system of real-valued functions defined locally on an open subset $U\subset M. $   Let  $\mathcal S $ be the common zero set of $\r_j, $ $j= 1, \ldots, 2k+1.  $   
Then $\mathcal S $ is  a complex manifold  of complex dimension $n-k$  with $J$ as its complex structure
if and only if the linear span of 
  $\{\bar\pa_b \r_1, \ldots, \bar\pa_b \r_{2k+1}\}$ has constant rank $k$ on $\mathcal S.$   
  \end{theorem}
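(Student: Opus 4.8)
The plan is to reduce the equivalence to a pointwise linear-algebra computation on the complexified tangent space of $\mathcal S$ and then invoke Theorem \ref{NN}. Since the system $\r_1,\ldots,\r_{2k+1}$ is non-degenerate, $\mathcal S$ is a smooth submanifold of real codimension $2k+1$, hence of real dimension $2(n-k)$, and $\CC\otimes T_p\mathcal S$ has complex dimension $2(n-k)$ at each $p\in\mathcal S$. The key observation is that for $L\in H^{1,0}_p(M)$ one has $\th(L)=0$ and $\bar\pa_b\r_\a(L)=0$, so that $d\r_\a(L)=\pa_b\r_\a(L)$. Consequently the $(1,0)$-part of $\mathcal S$ is
\[ \V_p:=(\CC\otimes T_p\mathcal S)\cap H^{1,0}_p(M)=\bigcap_{\a=1}^{2k+1}\Ker\big(\pa_b\r_\a|_{H^{1,0}_p(M)}\big), \]
so that $\dim_{\CC}\V_p=n-r$, where $r$ is the rank of $\{\pa_b\r_1,\ldots,\pa_b\r_{2k+1}\}$ on $H^{1,0}_p(M)$. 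Because each $\r_\a$ is real, $\pa_b\r_\a=\overline{\bar\pa_b\r_\a}$, so $r$ equals the rank of $\{\bar\pa_b\r_1,\ldots,\bar\pa_b\r_{2k+1}\}$ appearing in the statement.

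First I would record that $\V_p$ and its complex conjugate $\overline{\V_p}\subset H^{0,1}_p(M)$ both lie in $\CC\otimes T_p\mathcal S$: the first inclusion is the definition, and since the $d\r_\a$ are \emph{real} $1$-forms we have $d\r_\a(\bar L)=\overline{d\r_\a(L)}$, so $L\in\V_p$ forces $\bar L\in\CC\otimes T_p\mathcal S$. As $\V_p\subset H^{1,0}_p(M)$ and $\overline{\V_p}\subset H^{0,1}_p(M)$ meet only in $0$, the sum is direct and
\[ 2(n-r)=\dim_{\CC}(\V_p\oplus\overline{\V_p})\le\dim_{\CC}(\CC\otimes T_p\mathcal S)=2(n-k). \]
Thus $r\ge k$ holds automatically, with equality $r=k$ precisely when $\V_p\oplus\overline{\V_p}=\CC\otimes T_p\mathcal S$. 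Since $J$ acts as $\sqrt{-1}$ on $\V_p$ and as $-\sqrt{-1}$ on $\overline{\V_p}$, this last identity is exactly the assertion that $T_p\mathcal S\subset H_p(M)$ and $J(T_p\mathcal S)=T_p\mathcal S$. Hence, at the level of tangent spaces, $\mathcal S$ is a $J$-invariant submanifold contained in $H(M)$ if and only if $\{\bar\pa_b\r_\a\}$ has rank $k$ at every point of $\mathcal S$.

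To complete the ``if'' direction I would upgrade this pointwise statement to a smooth one. If the rank is constantly $k$, then $\V:=\bigsqcup_p\V_p$ is the kernel of a bundle map of constant rank, hence a smooth subbundle of $H^{1,0}(M)|_{\mathcal S}$ of rank $n-k$ coinciding with the $(1,0)$-bundle of $(\mathcal S,J)$. Using that $M$ carries an \emph{integrable} CR structure, i.e. $\G(H^{1,0}(M))$ is closed under the Lie bracket by (\ref{integrability1}), together with the fact that sections of $\V$ are tangent to $\mathcal S$, I would conclude $[L,L']\in\G(\V)$ for all $L,L'\in\G(\V)$, so that $J|_{T\mathcal S}$ satisfies the Nijenhuis condition (\ref{Nijenheus}) on $\mathcal S$. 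Theorem \ref{NN} then gives that $\mathcal S$ is a complex manifold of complex dimension $n-k$ with complex structure $J$. For the ``only if'' direction, if $\mathcal S$ is such a complex manifold then $J$ is defined and invariant on $T\mathcal S$, forcing $\CC\otimes T_p\mathcal S=\V_p\oplus\overline{\V_p}$ with $\dim_{\CC}\V_p=n-k$, whence $r=k$ at every point.

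The main obstacle is precisely this passage from pointwise linear algebra to a genuine complex-manifold structure in the ``if'' direction: one must verify that constant rank makes $\V$ a smooth \emph{involutive} subbundle, which is where both the constancy hypothesis and the ambient integrability (\ref{integrability1}) are used, and only then is Theorem \ref{NN} available. The pointwise equivalence itself is a direct dimension count, resting on the conjugation identity $\pa_b\r_\a=\overline{\bar\pa_b\r_\a}$ and on the containment $\V_p\oplus\overline{\V_p}\subset\CC\otimes T_p\mathcal S$.
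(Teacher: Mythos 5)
Your proof is correct and takes essentially the same route as the paper's: identify the $(1,0)$ and $(0,1)$ parts of $\CC\otimes T\mathcal S$ as the common kernels of the $\pa_b\r_\a$ (resp.\ $\bar\pa_b\r_\a$) on $H^{1,0}(M)$ (resp.\ $H^{0,1}(M)$), deduce $T\mathcal S=H(\mathcal S)$ by a dimension count, and conclude via Newlander--Nirenberg. Your write-up is in fact slightly tighter than the paper's dual-basis computation; the observations that the rank is automatically $\ge k$ and the explicit involutivity check for the $(1,0)$-bundle of $(\mathcal S,J)$ are refinements the paper leaves implicit.
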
 
  \vskip 1pc
  
  \begin{proof}   Suppose that $\mathcal S$ is a complex manifold of complex dimension $n-k$. Then
  \begin{equation}\label{CTS0} \mathbb C\otimes T(\mathcal S) = T^{1,0}(\mathcal S) \oplus T^{0,1}(\mathcal S),\end{equation} where  $T^{1,0}(\mathcal S) :=  H^{1,0}(M) \cap ~\mathbb C\otimes T(\mathcal S) $ and so forth. Each of the direct summand of the right-hand side of (\ref{CTS0}) has rank $n-k.$ 
  For  a local section  $V$ of $H^{0,1}(M)$  it is obvious that  $V\in T^{0,1}(\mathcal S)$ if and only if 
  \begin{equation}\label{drhov2} d\r_j(V)=0, \quad j=1, \ldots, 2k+1.\end{equation} Since $V\in H^{0,1}(M),$  (\ref{drhov2}) is equivalent to 
  \begin{equation*}\label{dbarrho2}\bar\pa_b\r_j(V)=0, \quad j=1, \ldots, 2k+1.\end{equation*}
 We note that there are $n-k$ independent vectors $V_1, \ldots, V_{n-k}$  in $T^{0,1}(\mathcal S). $ 
 Therefore,   the linear span 
\begin{equation}\label{spandbarb3}\langle \bar\pa_b\rho_1, \ldots, \bar\pa_b\rho_{2k+1}\rangle(x) \subset H_x^{*0,1}(M), \quad\forall x\in \mathcal S\end{equation}  has  rank  $n-(n-k)=k.$  

 Conversely,  suppose that (\ref{spandbarb3}) has  rank $k$ at $x\in \mathcal S.$  
Assuming the first $k$ elements of $\{ \bar\pa_b\rho_1, \ldots, \bar\pa_b\rho_{2k+1}\}$ are independent,  we  choose $1$-forms $\o^1, \ldots, \o^{n-k}\in H^{*0,1}(M)$ on a neighborhood $U\subset M$ of $x$ so that 
 ${ H^{*0,1}(M)}$ is spanned by 
\begin{equation*}\label{basis01} \{\bar\pa_b\r_1,\ldots, \bar\pa_b\r_k,  \o^1, \ldots, \o^{n-k} \}.\end{equation*}
Then 
\begin{equation}\label{basis3} \underbrace{\pa_b\r_1,\ldots, \pa_b\r_k, \bar\o^1,\ldots,\bar\o^{n-k}}_{H^{*1,0}(M)},\underbrace{ {\bar\pa_b\r_1,\ldots, \bar\pa_b\r_k,  \o^1,\ldots,\o^{n-k} }}_{H^{*0,1}(M)}, \th \end{equation} is a local basis of  the whole cotangent bundle
 $\mathbb C\otimes T^*(M).$   Let 
\begin{equation}\label{basis4} \underbrace{L_1, \ldots, L_k, \overline V_1, \ldots, \overline V_{n-k}}_{H^{1,0}(M)},\underbrace{ \overline L_1, \ldots, \overline L_k,  V_1, \ldots,  V_{n-k}}_{H^{0,1}(M)}, T\end{equation} be  the dual basis of (\ref{basis3}) for $\mathbb C\otimes T(M).$ 
 Now consider the bundle of maximal complex subspaces of  $T(S)$  given by
\begin{equation*}\label{H(S)}H(S) :=T(S)\cap JT(S) .\end{equation*}   
 Let $\mathcal V(\mathcal S)$ be a sub-bundle of $T(\mathcal S), $  locally defined on the neighborhood $U\cap \mathcal S $ of $x$,   so that  
\begin{equation*}\label{TSdecomposition} T(S)=H(S) \oplus \mathcal V(\mathcal S). \end{equation*} 
Then  the $J$-invariance of $\mathcal S $ comes from the dimension count of  
\begin{equation}\label{CTS} \mathbb C\otimes T(\mathcal S) = H^{1,0}(\mathcal S) \oplus H^{0,1}(\mathcal S) \oplus \left(\mathbb C\otimes \mathcal V(\mathcal S)\right). \end{equation}
To count the dimension of $H^{0,1}(\mathcal S),$  let $V:=X+\sqrt{-1}JX \in H^{0,1}(M), $  $X$ is a real vector,  be any one of $V_1, \ldots, V_{n-k}$ in (\ref{basis4}) and $\r$ be any one of $\r_1, \ldots, \r_{2k+1}.$
We have
\begin{equation*}\label{dbarbrho3}\begin{aligned} 0 =\bar\pa_b \r (V) &= d\r(V)\\
&= d\r(X+\sqrt{-1}JX) \end{aligned}\end{equation*}
which yields
\begin{equation*}\label{dbarbrho4} d\r(X)=0 \quad\text{and }\quad
 d\r (JX) = 0 \end{equation*}  so that  $X$ and $JX$ are tangent to $\mathcal S$.
Therefore,  $V_1, \ldots ,V_{n-k} $ are elements of $H^{0,1}(\mathcal S)$.
Hence $H^{0,1}(\mathcal S)$ has complex dimension $\ge n-k.$ But each fibre in the left-hand side of (\ref{CTS}) has complex dimension $2(n-k)$.
  Therefore,  the last component of the right-hand side of (\ref{CTS}) has rank zero,  which implies that $\mathcal V(\mathcal S) = 0. $    Thus we have 
\begin{equation*}\label{TS=HS} T(\mathcal S) = H(\mathcal S), \end{equation*} which means that $\mathcal S$ is 
$J$-invariant and the conlusion follows from Theorem \ref{NN}.
\end{proof}

If 
$(M, H(M),J)$ is pseudo-convex and $\mathcal S$
    has real dimension $\ge 2q$ then the defining functions $\r_j$'s  of Theorem \ref{complex manifold} can be found from the coefficients and their radicals  of the characteristic polynomial of the Levi-form.  
We adopt from \cite{K} the following 
\begin{definition} Let $\mathcal J $ be a subset of the ring $C^{\infty}(x_0)$  of germs at $x_0\in M$ of smooth 
functions.  The \emph{real radical}  of $\mathcal J$, denoted by $\sqrt[R]{\mathcal J}, $ is the set of all $g\in C^{\infty}(x_0)$ such that there exists an integer $m$ and an $f\in \mathcal J$ so that 
$$|g|^m \le |f| $$ on some neighborhood of $x_0.$
\end{definition}
$\sqrt[R]{\mathcal J} $ is an ideal.  We have   
\begin{corollary}\label{complex manifold2} Let  $M^{2n+1}  $ be a smooth CR manifold with pseudo-convex integrable CR structure $(H(M), J).$   For an integer  $q, $  $0\le q \le n,  $ let $\mathcal S_q$ be the set of points of $M$ where the Levi-form has nullity $\ge q.$   Suppose that the real dimension of  $\mathcal S_q$  is greater than or equal to $2q$
and that 
there is a non-degenerate set of real-valued functions  $\r_1, \ldots , \r_{2(n-q)+1}$
that generates $\sqrt[R]{\{A_0, \ldots, A_{q-1}\}}$ and satisfies 
\begin{equation*}\label{dbarbrho} \text{rank }\langle\bar\pa_b\r_1, \ldots, \bar\pa_b\r_{2(n-q)+1}\rangle = n-k \end{equation*}
on the common  zero set $\mathcal S $ of $\r_j$'s.  Then $\mathcal S $ is  a complex manifold of complex dimension $q$.
\end{corollary}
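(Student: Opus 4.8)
The plan is to deduce the Corollary directly from Theorem~\ref{complex manifold}, once two reductions are carried out: identifying the common zero set $\mathcal S$ of the $\r_j$ with $\mathcal S_q$, and matching indices. I would set $k:=n-q$, so that the given system $\r_1,\ldots,\r_{2(n-q)+1}$ has exactly $2k+1$ members and the target complex dimension $q$ equals $n-k$; with this substitution the hypothesis on $\langle\bar\pa_b\r_1,\ldots,\bar\pa_b\r_{2k+1}\rangle$ becomes the constant-rank condition ($\mathrm{rank}=k=n-q$) required in Theorem~\ref{complex manifold}. The entire task then reduces to showing that, as a germ at the base point $x_0$, $\mathcal S$ is the same set as $\mathcal S_q$, so that Theorem~\ref{complex manifold} may be applied to it.

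First I would record the elementary fact that passing to the real radical does not change the common zero locus. Writing $\mathcal J$ for the ideal of germs generated by $A_0,\ldots,A_{q-1}$ and $Z(\cdot)$ for common zero sets, one has $\mathcal J\subset\sqrt[R]{\mathcal J}$ (take $m=1$ and $f=g$ in the definition), whence $Z(\sqrt[R]{\mathcal J})\subset Z(\mathcal J)$; conversely, if $g\in\sqrt[R]{\mathcal J}$ satisfies $|g|^m\le|f|$ for some $f\in\mathcal J$ near $x_0$, then at any $p$ with $f(p)=0$ we get $|g(p)|^m\le|f(p)|=0$, so $g(p)=0$, giving $Z(\mathcal J)\subset Z(\sqrt[R]{\mathcal J})$. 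Hence $Z(\sqrt[R]{\mathcal J})=Z(\mathcal J)$. Since the $\r_j$ generate $\sqrt[R]{\mathcal J}$, each $\r_j$ lies in $\sqrt[R]{\mathcal J}$ and, as $A_i\in\mathcal J\subset\sqrt[R]{\mathcal J}=(\r_1,\ldots,\r_{2(n-q)+1})$, each $A_i$ lies in the ideal generated by the $\r_j$; the two systems therefore have the same common zero set. Combined with the theorem of \S 2, which identifies $Z(\{A_0,\ldots,A_{q-1}\})$ with $\mathcal S_q$, this yields $\mathcal S=\mathcal S_q$ as germs at $x_0$.

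With $\mathcal S=\mathcal S_q$ established, the rest is bookkeeping. Non-degeneracy of $\r_1,\ldots,\r_{2k+1}$, i.e. $d\r_1\we\cdots\we d\r_{2k+1}\neq 0$, forces $\mathcal S$ to be a smooth real submanifold of $M^{2n+1}$ of codimension $2k+1=2(n-q)+1$, hence of real dimension $2q$; this is consistent with, and makes effective, the stated necessary condition $\dim_{\mathbb R}\mathcal S_q\ge 2q$, which is what guarantees that a non-degenerate defining system of the correct size can be extracted from the radical. Feeding the rank hypothesis into Theorem~\ref{complex manifold} (with $k=n-q$) then shows $\mathcal S$ is $J$-invariant, and by Newlander--Nirenberg (Theorem~\ref{NN}) it is a complex manifold of complex dimension $n-k=q$, as claimed.

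The only genuine subtlety I anticipate is the real-radical step: one must use that the $\r_j$ \emph{generate} $\sqrt[R]{\mathcal J}$, not merely that they cut out the same set, since it is this that licenses replacing the possibly degenerate system $A_0,\ldots,A_{q-1}$ (of the wrong cardinality) by a non-degenerate system of size $2(n-q)+1$ with controlled rank. The inequality $|g|^m\le|f|$ is precisely the mechanism transferring vanishing from $\mathcal J$ to its radical and back; everything else is the index matching $k=n-q$ and a direct appeal to Theorems~\ref{complex manifold} and \ref{NN}.
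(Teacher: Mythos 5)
Your proposal is correct and follows essentially the same route as the paper: identify $k=n-q$, observe that since each $A_i$ lies in the ideal generated by the $\r_j$'s the zero set $\mathcal S$ sits inside $\mathcal S_q$, and then apply Theorem~\ref{complex manifold}. The paper's own proof is just a terser version of this (it records only the inclusion $\mathcal S\subset\mathcal S_q$ rather than the equality of germs), so your added detail on the real radical preserving zero sets is a harmless elaboration, not a different argument.
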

\begin{proof} On $\mathcal S_q,$ $A_j = 0, $ for all $j=0,1, \ldots, q-1.$  Since $\mathcal S$ is the common zero set of $\r_j$'s  we have  $\mathcal S\subset \mathcal S_q.$    The conclusion follows from  Theorem \ref{complex manifold}. \end{proof}

\begin{remark}
In analytic ($C^{\o}$) category we check a finite set of non-degenerate functions ${\vec \r}:=(\r_1, \r_2, \ldots, \r_{2(n-q)+1} ) $ including the generators of $\sqrt[R]{\{A_0, \ldots, A_{q-1}\}}$
and determine the existence of a complex manifold by showing that 
\begin{equation*}\label{analytic case}\begin{aligned}\bar\pa_b \r_1\we\cdots\we\bar\pa_b \r_{n-q} &\neq 0, \\
 \bar\pa_b \r_1\we\cdots\we\bar\pa_b \r_{n-q} \we \bar\pa_b\r_{\ell}& \equiv 0 \quad\text{mod }  ({\vec \r}),\quad \forall \ell.\end{aligned}\end{equation*}
 \end{remark}

\vskip 2pc


\begin{thebibliography}{9}

 
\bibitem[AH]{AH}
H. Ahn and C.~K. Han, \emph{Local geometry of Levi-forms associated with the existence
of complex submanifolds and the minimality of generic CR manifolds}, J. Geom. Anal.  22 (2012), ~561-- 582.






\bibitem[BCG3]{BCG3}{}
R.~L. Bryant, S.~S. Chern, R.~B. Gardner, H.~L. Goldschmidt, and P.~A.
  Griffiths, \emph{Exterior differential systems}, Mathematical Sciences
  Research Institute Publications, vol.~18, Springer-Verlag, New York, 1991.





\bibitem[CM]{CM} {S. S. Chern and J. K. Moser}, \textit{Real hypersurfaces in complex manifolds},  {Acta Math.}
             {133} (1974) , {219--271}.


\bibitem[CS]{CS}{} So-Chin Chen and Mei-Chi Shaw, 
 \emph{Partial Differential Equations in Several Complex Variables,}  AMS-IP studies in advanced math.  19,  Amer. Math. Soc. , Providence, RI, 2001.
 



\bibitem[DF]{DF} {K. Diederich and J. E. Fornaess}, \textit{Pseudoconvex domains with real-analytic boundary,}   {Ann. of Math. }
             {107} (1978), {371--384}.



\bibitem[NQD]{NQD} N. Q. Dieu, \textit{Zero sets of real polynomials containing complex varieties,}  {Illinois J. Math.} {55-1} (2011), 69--76.




\bibitem[HT]{HT} C.~K. Han and G. Tomassini, \emph{Complex submanifolds in real hypersurfaces,  } J. Korean Math. Soc.
{47}   (2010), 1001--1015.


\bibitem[H]{H}
L. H\"{o}rmander,\emph{An introduction to complex analysis in several variables}, North Holland Publ.,  New York, 1973.





\bibitem[K]{K} J. J. Kohn, \textit{Subellipticity of the $\bar\pa$-Neumann problem on pseudo-convex domains:
Sufficient conditions,}  {Acta Math.}
              {142} (1979), {79--122}.


\bibitem[NN]{NN} A. Newlander and L. Nirenberg, \textit{Complex analytic coordinates in almost complex manifolds,
}  {Ann. of Math.}
              {65} (1957),  {391--404}.





\bibitem[T]{T}  J.-M. Tr\'{e}preau,  \textit{Sur le prolongement holomorphe des fonctions C-R d\'{e}fines sur une hypersurface r\'{e}elle de classe $C^2$ dans $\mathbb C^n,$}  {Invent.  Math.}
             {83} (1986), {583--592}.



\end{thebibliography}
\end{document}